\newtheorem{theorem}{Theorem}
\newtheorem{lemma}{Lemma}
\theoremstyle{definition}
\newtheorem{definition}{Definition}
\newtheorem*{remark*}{Remark}
\newtheorem*{acknowledgement*}{Acknowledgement}
\def\Sr{S_{\mathfrak{z},2}(k;r)}
\begin{document}
\baselineskip=14.5pt
\title[The Determination of 2-color zero-sum generalized Schur Numbers]{The Determination of 2-color zero-sum\\ generalized Schur Numbers}

\author{Aaron Robertson}
\address[Aaron Robertson]{Department of Mathematics, Colgate University, Hamilton, New York}
\email[Aaron Robertson]{arobertson@colgate.edu}

\author{Bidisha Roy}
\author{ Subha Sarkar}
  \address[Bidisha Roy and Subha Sarkar]{Harish-Chandra Research Institute, HBNI, Jhunsi,
Allahabad, India}
\email[Bidisha Roy]{bidisharoy@hri.res.in}
\email[Subha Sarkar]{subhasarkar@hri.res.in}

\begin{abstract}
Consider the equation $\mathcal{E}: x_1+ \cdots+x_{k-1} =x_{k}$ and let $k$ and $r$ be positive integers such that $r\mid k$. The number $S_{\mathfrak{z},2}(k;r)$ is defined to be the least positive integer $t$ such that for any 2-coloring $\chi: [1, t] \to \{0, 1\}$ there exists a solution $(\hat{x}_1, \hat{x}_2, \ldots, \hat{x}_k)$ to the equation $\mathcal{E}$ satisfying 
$\displaystyle \sum_{i=1}^k\chi(\hat{x}_i) \equiv 0\pmod{r}$.  In a recent paper, the first author posed the question of determining the exact  value of $S_{\mathfrak{z}, 2}(k;4)$. In this article, we solve this problem and show,
more generally, that $S_{\mathfrak{z}, 2}(k, r)=kr - 2r+1$ for all positive integers $k$ and $r$ with $k>r$ and $r \mid k$.
\end{abstract}
\maketitle
\section{Introduction}

For $r \in  \mathbb{Z}^+$, there exists a least positive integer $S(r)$, called a {\it Schur number}, such that within every $r$-coloring of $[1, S(r)]$ there is a monochromatic solution to the linear equation $x_1+ x_2 =x_{3}$.

In 1933, Rado \cite{rado} generalized the work of Schur to arbitrary systems of linear equations. For any integer $k\geq 2$ and $r \in  \mathbb{Z}^+$, there exists a least positive integer $S(k;r)$, called a {\it generalized Schur number}, such that  every $r$-coloring of $[1, S(k;r)]$ admits a monochromatic solution of  equation $\mathcal{E} : x_1+ \cdots+x_{k-1} =x_{k}$.  Indeed, Rado \cite{rado} proved that the number $S(k,r)$ exists (is finite). In \cite{bb}, Beutelspacher and Brestovansky proved the exact value   $S(k;2) = k^2 - k - 1$.

Before we analogize the above number, we need the following definition.

\begin{definition}
Let $r \in  \mathbb{Z}^+$. We say that a set of integers $\{a_1, a_2, \ldots , a_n\}$ is {\it $r$-zero-sum}  if $\sum _{i=1}^n a_i   \equiv 0 \pmod r$.
\end{definition}

The Erd\H{o}s-Ginzburg-Ziv Theorem \cite{egz} is one of the cornerstones of   zero-sum theory (see, for instance, \cite{ad} and \cite{nd}). It states that {any sequence of $2n - 1$ integers must contain an $n$-zero-sum subsequence of $n$ integers}. In recent times, zero-sum theory has made remarkable progress (see, for instance, \cite{bd}, \cite{caro}, \cite{gg}, \cite{gbook}, \cite{gd}).

In \cite{aaron}, the first author replaced the ``monochromatic property'' of the generalized Schur number by the ``zero-sum property'' and introduced the following new number which is called a {\it zero-sum generalized Schur number}.

\begin{definition}
Let $k$ and $r$ be positive integers such that $r \mid k$. We define $S_{\mathfrak{z}}(k;r)$ to be the least positive integer $t$ such that for any $r$-coloring $\chi: [1, t] \to \{0, \ldots,r-1 \}$ there exists a solution 
$(\hat{x}_1, \hat{x}_2, \ldots, \hat{x}_k)$ to   $x_1+ \cdots+x_{k-1} =x_{k}$ satisfying $\displaystyle \sum_{i=1}^k\chi(\hat{x}_i) \equiv 0\pmod{r}$.
\end{definition} 

\noindent
{\bf Notation.} Throughout the article, we represent the equation $x_1+ \cdots+x_{k-1} =x_{k}$ by $\mathcal{E}$. 

Since $r \mid k$, note that if $(\hat{x}_1, \hat{x}_2, \ldots, \hat{x}_k)$ is a monochromatic solution to    equation $\mathcal{E}$,  then clearly it is an $r$-zero-sum solution. Hence, we get, 
$S_{\mathfrak{z}}(k;r) \leq S(k;r)$ and therefore, $S_{\mathfrak{z}}(k;r)$ is finite. 

In \cite{aaron},  the first author calculated lower   bounds of this number for some $r$. In particular, he proved the
following result. 
 \begin{theorem}\cite{aaron}\label{thm1} 
Let $k$  and $r$ be  positive integers such that $r \mid k$. Then,
$$
S_{\mathfrak{z}}(k;r) \geq
\begin{cases}
    3k-3   &  \text{ when }r=3;\\
    4k-4  &   \text{ when }r=4;\\
    2(k^2 - k - 1)  & \text{ when } r=k \text{ is an odd positive integer}.
\end{cases}
$$
\end{theorem}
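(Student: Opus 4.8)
The plan is to prove each lower bound by exhibiting, for the relevant target value $N$, an explicit $r$-colouring $\chi$ of $[1,N-1]$ that is \emph{zero-sum-free}: every solution $(\hat x_1,\dots,\hat x_k)$ of $\mathcal E$ with all entries in $[1,N-1]$ satisfies $\sum_{i=1}^k\chi(\hat x_i)\not\equiv 0\pmod r$. Since $S_{\mathfrak z}(k;r)$ is the least $t$ for which \emph{no} such colouring of $[1,t]$ exists, producing $\chi$ on $[1,N-1]$ forces $S_{\mathfrak z}(k;r)\ge N$. The engine behind the two linear cases is a crude but decisive restriction on solutions: if every variable lies in $[1,N-1]$, then the \emph{excess} $E:=\sum_{i=1}^{k-1}(\hat x_i-1)=\hat x_k-(k-1)$ satisfies $E\le N-1-(k-1)$. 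For $N=r(k-1)$ this gives $E<(r-1)(k-1)$, and since any summand exceeding $k-1$ contributes excess at least $k-1$, at most $r-2$ of the $k-1$ summands can exceed $k-1$, with their sizes tightly pinned. Hence, once $\chi$ is taken constant on length-$(k-1)$ blocks $[j(k-1)+1,(j+1)(k-1)]$, the solutions fall into \emph{finitely many shapes} (a number depending only on $r$, not on $k$), each contributing one congruence on the block colours.

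For $r=3$ (so $N=3k-3$) and $r=4$ (so $N=4k-4$) I would carry out exactly this reduction. Writing $a_0,a_1,\dots$ for the colours of the successive blocks, each solution shape yields a condition of the form $\sum_j m_j a_j\not\equiv 0\pmod r$, where $m_j$ counts how many of the $k$ variables (the summands together with $\hat x_k$) land in block $j$; here one uses $k\equiv 0\pmod r$ repeatedly to simplify coefficients. One forced condition is that the all-ones solution $(1,\dots,1,k-1)$ not be zero-sum, which already yields $\chi(1)\ne\chi(k-1)$, so the first block must stop before $k-1$. The task is then to choose the block boundaries and the colours $a_j$ so that all of the finitely many congruences hold simultaneously. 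The main obstacle is that this congruence system is genuinely tight: a naive equal-block colouring overdetermines it and fails (for instance the shape with two summands of size $k-1$ clashes with the shape carrying a size-$(2k-2)$ summand), so the boundaries, and possibly a small non-interval correction near $k-1$ and near the top of the range, must be tuned to separate the shapes producing conflicting congruences. Confirming that the listed shapes are exhaustive and that the chosen $\chi$ defeats each of them is a finite check for every fixed $r$.

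For $r=k$ the philosophy is different and rests on the Beutelspacher--Brestovansky value $S(k;2)=k^2-k-1$. First note a reduction for two-valued colourings: if $\chi$ takes only the values $0,1$, then $\sum_{i=1}^k\chi(\hat x_i)$ equals the number of variables coloured $1$, an integer in $[0,k]$, which is $\equiv 0\pmod k$ precisely when it is $0$ or $k$, that is, precisely when the solution is monochromatic. Thus a $\{0,1\}$-valued $k$-colouring is zero-sum-free if and only if it is a monochromatic-solution-free $2$-colouring, and the extremal such colouring of $[1,k^2-k-2]$ already gives $S_{\mathfrak z}(k;k)\ge k^2-k-1$. To gain the extra factor of two one must use at least three colour values, so that a zero-sum solution is no longer forced to be monochromatic; the construction places a stretched copy of the extremal $S(k;2)$-colouring on $[1,2(k^2-k-1)-1]$, and it is precisely the oddness of $k$ (equivalently, the invertibility of $2$ modulo $k$) that keeps the stretched colouring zero-sum-free. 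The hard part is the interaction between the two scales: one must show that no solution straddling the stretched blocks can attain colour-sum $\equiv 0\pmod k$, and this is exactly where the oddness of $k$ is used.

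Across all three cases the single hardest point is the coupling of the forced ``small'' solutions (driven by the all-ones solution and its near-relatives) with the few admissible ``large'' summands: it is the simultaneous satisfaction of the resulting congruences --- tight for $r=3,4$ and scale-mixing for $r=k$ --- that dictates the exact colouring and thereby pins down the stated bounds.
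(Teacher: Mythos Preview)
The paper does not prove Theorem~\ref{thm1}; it is quoted from \cite{aaron} and no argument for it appears here, so there is nothing in this paper to compare your proposal against.

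As for the proposal on its own terms: what you have written is a plan, not a proof. For $r=3$ and $r=4$ the block-colouring-plus-finite-case-analysis strategy is the standard route and is viable, but you never actually exhibit a colouring or carry out the verification; you yourself observe that the na\"ive equal-block colouring fails and that the boundaries ``must be tuned'', yet the tuning is never done. For $r=k$ odd the proposal is too vague to assess: ``a stretched copy of the extremal $S(k;2)$-colouring'' does not specify which colour values are used where (you correctly note that at least three values are needed, but then do not say how they are deployed), and you label the scale-mixing interaction the ``hard part'' without resolving it. To turn this into a proof you would need, in each of the three cases, the explicit colouring of $[1,N-1]$ together with the check that every admissible solution shape violates the zero-sum condition.
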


In the same article, he introduced another number which is meant only for $2$-colorings; but keeping the $r$-zero-sum notion.

\begin{definition}
Let $k$ and $r$ be positive integers such that $r \mid k$. We denote by $S _{\mathfrak{z},2}(k;r)$ the least positive integer such that every 2-coloring of $\chi: [1,S _{\mathfrak{z},2}(k;r) ] \to \{0, 1 \}$ admits a solution $(\hat{x}_1, \hat{x}_2, \ldots, \hat{x}_k)$ to   equation $\mathcal{E}$ satisfying $\displaystyle \sum_{i=1}^k\chi(\hat{x}_i) \equiv 0\pmod{r}$.
\end{definition}

Since any $2$-coloring of $[1, S_{\mathfrak{z}}(k;r)]$ is also an $r$-coloring (for $r\geq 2$),  we see that $S_{\mathfrak{z},2}(k; r) \leq S_{\mathfrak{z}}(k;r)$
and hence $S_{\mathfrak{z},2}(k;r)$ is finite.

In \cite{aaron},  the first author proved the following theorem related to these 2-color zero-sum generalized
Schur numbers.

\begin{theorem}\cite{aaron}\label{thm2}
Let $k$  and $r$ be two positive integers such that $r \mid k$. Then,
$$
S_{\mathfrak{z},2}(k;r) =
\begin{cases}
    2k-3;   &  \text{if }r=2\\
    3k-5;  &   \text{if }r=3 \text{ and } k \neq 3\\
    k^2-k-1; &  \text{if }r=k
\end{cases}
$$
\end{theorem}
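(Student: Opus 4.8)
The plan is to handle the three cases by a common normalization and then split on the modulus. Two observations hold for every $r \mid k$. First, since $r \mid k$, interchanging the two colors carries a solution with $j$ of its $k$ parts $1$-colored to one with $k-j$ parts $1$-colored, and $j \equiv 0 \pmod r$ if and only if $k-j \equiv 0 \pmod r$; thus $\chi$ and its swap $\bar\chi$ have identical zero-sum solution sets, so I may always assume $\chi(1)=0$. Second, with $\chi(1)=0$ the all-ones solution $(1,\dots,1,k-1)$ has color-sum $\chi(k-1)$, so a coloring with $\chi(k-1)=0$ already has a zero-sum solution; the analysis for $r=2$ and $r=3$ may therefore assume $\chi(1)=0$ and $\chi(k-1)=1$. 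The case $r=k$ needs neither step: with two colors and modulus $k$ the color-sum lies in $\{0,1,\dots,k\}$ and is divisible by $k$ only when it equals $0$ or $k$, i.e. only for monochromatic solutions, so $S_{\mathfrak{z},2}(k;k)$ equals the $2$-color generalized Schur number $S(k;2)=k^2-k-1$ of Beutelspacher and Brestovansky \cite{bb}, which is the third case.

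For the lower bounds when $r=2,3$ (indeed for every $r$ with $2\le r<k$) I will exhibit one explicit coloring of $[1,r(k-2)]$ with no zero-sum solution, giving $S_{\mathfrak{z},2}(k;r)\ge r(k-2)+1=kr-2r+1$. Color $[1,k-2]$ with $0$ and $[k-1,r(k-2)]$ with $1$. In any solution $x_k=\sum_{i<k}x_i\in[k-1,r(k-2)]$ is always $1$-colored, while the total excess $\sum_{i<k}(x_i-1)=x_k-(k-1)$ is at most $(r-1)(k-2)-1$ and a left variable is $1$-colored only if its excess is at least $k-2$, so at most $r-2$ of the left variables are $1$-colored. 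Hence every solution has color-sum in $\{1,2,\dots,r-1\}$, never $\equiv 0\pmod r$. For $r=2,3$ this is the coloring of $[1,2k-4]$, resp. $[1,3k-6]$, giving the stated lower bounds; for $r=k$ the construction is not tight and the Schur lower bound behind $S(k;2)$ is used instead.

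The substance is the matching upper bound for $r=2,3$, which I would obtain by playing a few sliding families of solutions against the constraint $\chi(k-1)=1$. For $r=2$, with $\chi(1)=0$ and $\chi(k-1)=1$, consider first $(1^{k-2},a,a+k-2)$ for $a\in[1,k-1]$, of color-sum $\chi(a)+\chi(a+k-2)$; if some member is even we are done, so assume $\chi(a+k-2)=1-\chi(a)$ throughout. Feeding this into the second family $(2,1^{k-3},a,a+k-1)$, $a\in[1,k-2]$, whose color-sum reduces to $\chi(2)+\chi(a)+\chi(a+1)+1\pmod 2$, shows that if no even solution occurs then $\chi(a+1)\equiv\chi(a)+\chi(2)$ for all $a\in[1,k-2]$, whence $\chi(k-1)\equiv(k-2)\chi(2)\equiv 0\pmod 2$ because $k$ is even, contradicting $\chi(k-1)=1$. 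So an even color-sum solution exists and $S_{\mathfrak{z},2}(k;2)\le 2k-3$.

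The hard part, and where I expect the main difficulty, is the $r=3$ upper bound. Now a zero-sum solution needs color-sum $\equiv 0\pmod 3$, and it suffices to realize $0$ or $3$ of the $k$ parts as $1$-colored; the families below are designed to produce exactly these counts. With $\chi(1)=0$, $\chi(k-1)=1$, I would use the all-ones slider $(1^{k-2},a,a+k-2)$ (color-sum $0$ when $a$ and $a+k-2$ are both $0$-colored), the one-big-part slider $(k-1,1^{k-3},a,2k-4+a)$ with $a\in[1,k-1]$ (color-sum $3$ when $a$ and $2k-4+a$ are both $1$-colored), and the two-big-part solution $(k-1,k-1,1^{k-3},3k-5)$ (color-sum $3$ when $3k-5$ is $1$-colored). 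A useful shortcut is that if either color class contains a solution to $\mathcal{E}$ then the color-sum is $0$ or $k\equiv 0\pmod 3$, so I may assume $\chi$ has no monochromatic solution on $[1,3k-5]$, which is consistent since $3k-5<S(k;2)$ for $k\ge 6$. The core is then a careful case analysis showing that, once each family fails to produce a zero-sum solution, the colors of $[1,3k-5]$ are forced into a pattern incompatible with $3\mid k$, exactly as the $r=2$ recursion was incompatible with $k$ even; arranging the slider ranges to cover all of $[1,3k-5]$ is what pins the value to $3k-5$ rather than anything smaller. Finally, the exclusion $k\neq 3$ is genuine: when $k=3=r$ the problem collapses into the $r=k$ case, with answer $k^2-k-1=5$ rather than $3k-5=4$, so the $r=3$ formula is asserted only for $k\ge 6$.
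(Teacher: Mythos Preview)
The paper does not prove Theorem \ref{thm2}; it is stated as a citation from \cite{aaron} and used as input for the main result (Theorem \ref{thm3}). Consequently there is no proof in this paper to compare against, and your proposal is a standalone attempt at a result the authors take as known.

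On the merits of your attempt: the $r=k$ reduction to $S(k;2)$ via the observation that a $\{0,1\}$-valued sum of $k$ terms is divisible by $k$ only when monochromatic is correct and is exactly the intended argument. Your lower-bound coloring $0^{k-2}1^{(r-1)(k-2)}$ is the same one the present paper uses for Theorem \ref{thm3}, and your excess-counting justification is clean. Your $r=2$ upper-bound argument via the two sliding families is complete and correct: the recursion $\chi(a+1)\equiv\chi(a)+\chi(2)\pmod 2$ together with $k$ even forces $\chi(k-1)=0$, a contradiction.

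The genuine gap is the $r=3$ upper bound. You name three families and assert that their simultaneous failure ``forces the colors of $[1,3k-5]$ into a pattern incompatible with $3\mid k$,'' but you do not carry this out, and the analogy with the $r=2$ recursion is not automatic: for $r=3$ you need the color-sum to hit $0$ or $3$, not merely to have fixed parity, so a single linear recursion on $\chi$ is not enough. A complete argument here requires tracking two constraints (e.g.\ the positions of $0$- and $1$-colored elements separately) and typically splits on $\chi(2)$ and $\chi(3k-5)$; as written, this case is a plan rather than a proof.
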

One notes that the exact values of $S_{\mathfrak{z},2}(k;r)$ for $r = 2,3$  and $S_{\mathfrak{z},2}(r,r)$ do not show any obvious generalization to $S_{\mathfrak{z},2}(k;r)$ for any $k$ which is a multiple of $r$. However, the computations given in \cite{aaron} when $r=4$ and $k =4, 8, 12$, and when $ r=5$ and $k=5, 10, 15$, were enough for us to conjecture a general formula,
which turns out to hold. To this end, by Theorem \ref{thm3} below,
we answer a question posed by the first author in \cite{aaron} and, more generally,
determine 
the exact values of $S_{\mathfrak{z},2}(k;r)$.

\begin{theorem} \label{thm3}
Let $k$ and  $r$ be positive integers  such that $ r \mid k$ and $k>r$. Then, 
$$S_{\mathfrak{z},2}(k;r)= rk -2r+1.
$$
\end{theorem}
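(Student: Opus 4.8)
The plan is to prove the two inequalities $S_{\mathfrak z,2}(k;r)\ge rk-2r+1$ and $S_{\mathfrak z,2}(k;r)\le rk-2r+1$ separately (the case $r=1$ being trivial, so assume $r\ge 2$, whence $r\mid k$ and $k>r$ force $k\ge 2r\ge 4$). It is convenient to think of a $2$-coloring as a red/blue coloring, with $0=$ red and $1=$ blue, so that $\sum_i\chi(\hat x_i)$ is just the number of blue entries among $\hat x_1,\dots,\hat x_k$. A first useful observation is a color-swap symmetry: since $r\mid k$, replacing $\chi$ by $1-\chi$ turns a solution with $s$ blue entries into one with $k-s\equiv -s\pmod r$ blue entries, so the $r$-zero-sum property — and hence the number $S_{\mathfrak z,2}(k;r)$ — is invariant under interchanging the two colors; I will use this freely.

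For the lower bound I exhibit one coloring of $[1,rk-2r]$ admitting no $r$-zero-sum solution of $\mathcal E$: color $[1,k-2]$ red and $[k-1,rk-2r]$ blue. Given any solution $(x_1,\dots,x_k)$ inside this interval, $x_k=x_1+\dots+x_{k-1}\ge k-1$ forces $\chi(x_k)=1$; moreover, if $j$ of $x_1,\dots,x_{k-1}$ are blue (hence $\ge k-1$), then $x_k\ge j(k-1)+(k-1-j)$, which combined with $x_k\le rk-2r$ yields $j\le r-2$. Thus the number of blue entries is $j+1\in\{1,\dots,r-1\}$, never a multiple of $r$, so $S_{\mathfrak z,2}(k;r)> rk-2r$.

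For the upper bound, fix a $2$-coloring $\chi$ of $[1,N]$ with $N=rk-2r+1$ and, by the color swap, assume $1$ is red. Two quick cases: if $k-1$ is red then $(1,\dots,1;k-1)$ has no blue entry; if $N$ is blue then the solution with $r-1$ parts equal to $k-1$, with $k-r$ parts equal to $1$, and with $x_k=N$ (one checks $(r-1)(k-1)+(k-r)=N$) has exactly $r$ blue entries. So assume $k-1$ is blue and $N$ is red. The main tool is the elementary fact that if $[1,\ell]$ is red then every $m\in[k-1,(k-1)\ell]$ is a sum of $k-1$ reds from $[1,\ell]$, producing an all-red solution with $x_k=m$ whenever $m$ is red. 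If $[1,k-2]$ is entirely red then either some $m\in[k-1,N]$ is red — and $N\le(k-1)(k-2)$ because $k\ge 4$, so $m$ is composable into red parts and we are done — or $[k-1,N]$ is all blue, which is already covered. Otherwise let $\beta\in[2,k-2]$ be the least blue value, so $[1,\beta-1]$ is red. If $\beta\ge r+1$ then $(k-1)(\beta-1)\ge N$, and since $N$ is red we again obtain an all-red solution with $x_k=N$. If $3\le\beta\le r$, the targets realizable with exactly $r-1$ parts equal to $\beta$ and the other $k-r$ parts drawn from the red set $[1,\beta-1]$ form an integer interval $I_{r-1}\subseteq[1,N]$, on which a blue point gives $r$ blue entries; the targets with exactly $r$ parts equal to $\beta$ and $k-1-r$ parts from $[1,\beta-1]$ form an interval $I_r$, on which a red point gives $r$ blue entries; a short computation using $\beta\le r$ and $k\ge 2r$ shows that $I_{r-1}\cap I_r\ne\emptyset$ and that both intervals lie in $[1,N]$, so the two families cannot both fail. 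Finally, the case $\beta=2$ — where the red parts are forced to equal $1$ and the interval trick degenerates — is handled by a finer analysis: one varies the number of parts equal to $2$, uses the next red value $\ge 3$ when it exists, and disposes by hand of the few remaining highly structured colorings (roughly, those that are $2$-periodic on the relevant window).

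The step I expect to be the main obstacle is precisely this last case $\beta=2$, together with the recurring need to check that every target $x_k$ built in the argument satisfies $x_k\le N$: since $N=rk-2r+1$ is exactly the value at which all of these range inequalities become tight, the difficulty lies in the careful bookkeeping rather than in any single idea.
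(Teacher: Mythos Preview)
Your lower bound is correct and identical to the paper's. Your upper-bound strategy --- organizing the case analysis around the least blue value $\beta\in[2,k-2]$ and using an interval-overlap argument --- is genuinely different from the paper's, which instead uses two preparatory lemmas to pin down the colors of the specific integers $r-1,\,r,\,k-2,\,k-1,\,k,\,rk-2r-1,\,rk-2r+1$ and then exhibits a short chain of explicit solutions (treating $r=4$ separately and citing earlier results for $r\le 3$). Your treatment of $\beta\ge r+1$ and of $3\le\beta\le r$ is correct: the overlap $I_{r-1}\cap I_r\neq\emptyset$ and the containment in $[1,N]$ both hold under $k\ge 2r$ and $\beta\ge3$.

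The gap is the case $\beta=2$, and it is not a technicality. You only sketch it (``vary the number of parts equal to $2$, use the next red value $\ge3$ when it exists, dispose by hand of the few remaining highly structured colorings''), and the key subcase --- no red value in $[3,k-2]$ at all --- is not addressed. This matters because the paper's argument shows that, for $r\ge5$, after the preliminary reductions one \emph{always} has $\chi(2)=\chi(3)=1$; in your language, $\beta=2$ with $3$ also blue is the generic surviving configuration, not a corner case. Thus your interval method efficiently disposes of configurations that, from the paper's viewpoint, never occur, while leaving unproved precisely the configuration that always occurs. Note too that your hint ``use the next red value $\ge3$'' does not immediately recover an interval of reachable sums: with red parts restricted to $\{1,a\}$ the sums form an arithmetic progression of step $a-1$, not a full interval, so the overlap trick needs reworking. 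Completing your approach would require a genuine argument here --- for instance, first extracting forced red values above $k-1$ from the family $(\underbrace{1,\dots,1}_{k-1-j},\underbrace{2,\dots,2}_{j};\,k-1+j)$ and then combining them --- comparable in effort to the paper's own endgame; as written, the proof is incomplete.
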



\section{Preliminaries}
 
We start by presenting a pair of  lemmas useful for proving our upper bounds.

\begin{lemma}\label{lem1}
Let $k$ and $r$ be positive integers such that $r \mid k$ and $k\geq 2r$. Let $\chi: [1, rk -2r +1] \to \{0, 1\}$ be a 2-coloring such that $\chi(1) = \chi(r-1)=0$. Then there exists an $r$-zero-sum solution to  equation $\mathcal{E}$ under $\chi$.
\end{lemma}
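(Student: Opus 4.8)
The plan is to argue by contradiction: assume the $2$-coloring $\chi$ of $[1,N]$, $N=rk-2r+1$, has $\chi(1)=\chi(r-1)=0$ but admits no $r$-zero-sum solution of $\mathcal E$, and squeeze out enough rigidity to build one anyway. The case $r=2$ is the upper bound of Theorem~\ref{thm2}, so assume $r\ge 3$, and write $k=r\ell$, $\ell\ge 2$. The basic device is the family of ``padding solutions'': for $0\le i\le k-2$ and an auxiliary value $m$, put $k-2-i$ of the $x_j$ equal to $1$, $i$ equal to $r-1$, and one equal to $m$; then $x_k=(k-2)+i(r-2)+m$ and, since $\chi(1)=\chi(r-1)=0$, the colour-$1$ count is simply $\chi(m)+\chi(x_k)$. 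A routine check with $k\ge 2r$ shows all entries lie in $[1,N]$ whenever $m+(k-2)+i(r-2)\le N$; later I also allow a bounded number of the $x_j$ to be a colour-$1$ value already located.

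Step 1 (rigidity). Since $r\ge 3$, avoiding an $r$-zero-sum padding solution forces $\chi(m)=1$ or $\chi(x_k)=1$ for every admissible pair; feeding in $m=1$ and then $m=r-1$ (both colour $0$) forces the whole progression $W:=\{(k-1)+i(r-2):0\le i\le k-1\}\subseteq[1,N]$ to be monochromatically coloured $1$. In particular $\chi(k-1)=1$, so the tuple with $r-1$ copies of $k-1$ and $k-r$ copies of $1$ is a solution with $x_k=N$ and colour count $(r-1)+\chi(N)$; thus $\chi(N)=1$ would finish, and we may assume $\chi(N)=0$.

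Step 2 (the large and moderate ranges of $t$). Let $t:=\min\{n:\chi(n)=1\}$ (if no such $n$ exists, every solution is $r$-zero-sum), so $2\le t\le k-1$, $t\ne r-1$, and $t=2$ forces $r\ge 4$. Using $k\ge 2r$ one checks $\lfloor\frac{(r-2)k+1}{r-1}\rfloor\ge r$, so the ranges $t=2$, $3\le t\le\lfloor\frac{(r-2)k+1}{r-1}\rfloor$, and $t\ge r+1$ cover $[2,k-1]$. If $t\ge r+1$, then $[1,t-1]$ is coloured $0$ and $(k-1)(t-1)\ge(k-1)r\ge N$, so $N$ is a sum of $k-1$ terms from $[1,t-1]$, yielding a solution of colour count $\chi(N)=0$. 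If $3\le t\le\lfloor\frac{(r-2)k+1}{r-1}\rfloor$ (hence $t\le k-2$), take $r-1$ copies of $t$ together with $k-r$ terms from $[1,t-1]$: the attainable values of $x_k$ form an interval of integers of length $\frac{(k-r)(t-2)}{r-2}>1$, which one checks meets $W$ inside $[1,N]$, and choosing $x_k$ there gives colour count $(r-1)+1=r$.

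Step 3 (the case $t=2$, which needs $r\ge 4$). Now $2\not\equiv 1\pmod{r-2}$ in general, so the earlier patterns fail. My plan is to build a solution from $a$ copies of $2$, $c$ copies of $k-1$, and $r$ copies of $1$, with $a+c=(\ell-1)r-1$; then $x_k=2a+c(k-1)+r$ and the colour count is $(\ell-1)r-1+\chi(x_k)$. Imposing $x_k\equiv k-1\pmod{r-2}$, so that $x_k\in W$, reduces after simplification to the single congruence $(2\ell-3)(c+1)\equiv 0\pmod{r-2}$, which always has a solution with $0\le c\le r-3$, namely $c+1=\tfrac{r-2}{\gcd(2\ell-3,\,r-2)}$. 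For such $c$ one verifies $k-1\le x_k\le(k-1)(r-1)<N$, so $x_k\in W$ and $\chi(x_k)=1$, whence the colour count is $(\ell-1)r\equiv 0\pmod r$ — the sought $r$-zero-sum solution. The real work lies in this last case (fixing the multiplicities and confirming $x_k\le(k-1)(r-1)$ for every admissible $c$); the remaining steps are routine estimates governed by $r\mid k$ and $k\ge 2r$.
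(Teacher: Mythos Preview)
Your argument is correct (the one imprecise phrase is the ``interval of length $\frac{(k-r)(t-2)}{r-2}$'' in Step~2 --- the interval of attainable $x_k$ actually has length $(k-r)(t-2)$; what you presumably mean is that this exceeds the common difference $r-2$ of $W$, which together with the check $A=(r-1)t+k-r\in[\min W,\max W]$ guarantees a hit in $W$; this is easily fixed and does not affect the logic).

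However, your route is substantially more elaborate than the paper's. You first force an entire arithmetic progression $W$ of $k$ terms to be colour~$1$, pin down $\chi(N)=0$, and then case-split on $t=\min\chi^{-1}(1)$, with Step~3 requiring a congruence analysis to land $x_k$ in $W$. The paper, by contrast, dispatches the lemma with four explicit solutions: from $(1,\dots,1,k-1)$ deduce $\chi(k-1)=1$; from $(\underbrace{1}_{k-r},\underbrace{k-1}_{r-1},N)$ deduce $\chi(N)=0$; from $(1,r,\dots,r,N)$ deduce $\chi(r)=1$; and then $(\underbrace{r-1}_{r-1},\underbrace{r}_{k-r},N)$ has colour count $k-r\equiv 0\pmod r$, finishing. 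Your approach buys more structural information (all of $W$ is colour~$1$), which could be useful elsewhere, but for the lemma as stated the paper's four-line chain is considerably more economical and avoids the case analysis on $t$ and the congruence $(2\ell-3)(c+1)\equiv 0\pmod{r-2}$ altogether.
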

\begin{proof}
Consider the solution $(1, 1, \ldots, 1, k-1)$.  If $\chi(k-1) = 0$, then, since $\chi(1) = 0$, we are done. Hence, we shall assume that $\chi(k-1) =1$. 

Next, we look at the solution 
$$ 
(\underbrace{1, \ldots, 1}_{k-r}, \underbrace{k-1,k-1,k-1}_{r-1}, rk-2r+1).$$
Since $\chi(1) = 0$ and $\chi(k-1) = 1$, we can assume that $\chi(rk-2r+1) = 0$; otherwise, we have
exactly $r$ integers of color $1$ and so the solution is $r$-zero-sum.

Since $(1, r, r, \ldots, r, rk-2r+1)$ is a solution to $\mathcal{E}$, we can assume that $\chi(r) = 1$. 

Finally, consider $$(\underbrace{r-1,\ldots, r-1}_{r-1},\underbrace{r,\ldots,r}_{k-r},rk-2r+1).$$ 
Since $\chi(r-1) = 0,  \chi(r) =1$, $\chi(rk-2r+1) = 0$, and $r \mid k$,  this solution is $r$-zero-sum, thereby
proving the lemma.
\end{proof}

\vskip 5pt
\begin{lemma}\label{lem2}
Let $k$ and $r$ be positive integers such that $r \mid k$ and $k\geq 2r$. Let $\chi: [1, rk -2r +1] \to \{0, 1\}$ be a coloring such that $\chi(1) = 0$ and $\chi(r-1) =1$. If one of the following holds, 
then there exists an $r$-zero-sum solution to  equation $\mathcal{E}$:

\begin{enumerate}
\item[\it(a)]$\chi(r) = 0$; 
 \item[\it(b)]$\chi(k-2) = 1$; 
 \item[\it(c)]$\chi(k-1) = 0$;
\item[\it(d)] $\chi(k) = 0$
\item[\it(e)]$\chi(rk-2r-1) = 0$;
\item[\it(f)]$\chi(rk-2r+1) = 1$.

\end{enumerate}
\end{lemma}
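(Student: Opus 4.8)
I would prove all six parts by the same device: for each of the conditions (a)--(f) I would write down one explicit tuple solving $\mathcal E$ and check that the sum of its $\chi$-values is $0\pmod r$, permitting myself in three cases a preliminary split on the colour of a single auxiliary integer that, in the worst case, reduces the claim to an already-settled part. The only colours I ever need to know are $\chi(1)=0$, $\chi(r-1)=1$, the colour provided by the current hypothesis, and occasionally a colour occurring with multiplicity divisible by $r$, which therefore vanishes modulo $r$. Note we may assume $r\ge 3$, since $r=2$ forces $\chi(r-1)=\chi(1)$, contradicting the standing hypotheses.

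First the parts requiring no split. For (c): the tuple $(1,\dots,1,k-1)$ with $k-1$ ones solves $\mathcal E$ and has colour sum $(k-1)\chi(1)+\chi(k-1)=0$. For (f): the tuple $\bigl(\underbrace{r-1,\dots,r-1}_{r-1},\underbrace{r,\dots,r}_{k-r},rk-2r+1\bigr)$ has $k-1$ summands adding to $rk-2r+1$ and colour sum $(r-1)\cdot1+(k-r)\chi(r)+1=r+(k-r)\chi(r)\equiv 0\pmod r$, because $r\mid k-r$. For (e): put $m=(r-1)\tfrac{k}{r}-1$, which is a positive integer lying in $[1,rk-2r+1]$ because $r\mid k$ and $k\ge 2r$; then $\bigl(\underbrace{1,\dots,1}_{k-1-r},\underbrace{m,\dots,m}_{r},rk-2r-1\bigr)$ has $k-1$ summands adding to $(k-1-r)+rm=rk-2r-1$, and its colour sum is $r\chi(m)+\chi(rk-2r-1)\equiv 0\pmod r$ by the hypothesis $\chi(rk-2r-1)=0$. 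The role of $m$ here is exactly that $r$ equal copies of it contribute $0$ modulo $r$ regardless of $\chi(m)$, which is what allows $x_k$ to be the prescribed value $rk-2r-1$.

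Parts (a), (b), (d) all open with: if $\chi(rk-2r+1)=1$, invoke (f); so assume $\chi(rk-2r+1)=0$. Then (a) is finished by $\bigl(1,\underbrace{r,\dots,r}_{k-2},rk-2r+1\bigr)$, which has $k-1$ summands adding to $rk-2r+1$ and colour sum $\chi(1)+(k-2)\chi(r)+\chi(rk-2r+1)=0$ since $\chi(r)=0$. For (b): if moreover $\chi(r)=0$ then (a) applies, so assume $\chi(r)=1$; now $\bigl(\underbrace{1,\dots,1}_{k-r-1},\underbrace{k-2,\dots,k-2}_{r-1},r,rk-2r+1\bigr)$ has $k-1$ summands adding to $rk-2r+1$ and colour sum $(r-1)\chi(k-2)+\chi(r)+\chi(rk-2r+1)=(r-1)+1+0=r\equiv 0\pmod r$. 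For (d): if $\chi(2)=0$ then the tuple $(1,\dots,1,2,k)$ with $k-2$ ones has colour sum $\chi(2)+\chi(k)=0$; if instead $\chi(2)=1$, then $\bigl(1,\underbrace{2,\dots,2}_{k-r},\underbrace{k,\dots,k}_{r-2},rk-2r+1\bigr)$ has $k-1$ summands adding to $rk-2r+1$ and colour sum $(k-r)\chi(2)+(r-2)\chi(k)+\chi(rk-2r+1)=(k-r)+0+0\equiv 0\pmod r$.

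The arguments are one line each, so the actual labour is the uniform verification: for every admissible $(k,r)$ with $r\mid k$, $k\ge 2r$, each displayed tuple must be checked to have exactly $k-1$ summands, all positive, summing to an $x_k$ with $x_k\le rk-2r+1$, and to carry a colour sum divisible by $r$. The one genuinely non-mechanical point is the dependency structure: parts (b) and (d) admit no ``self-contained'' solution, and one must route them through (a) and/or (f) after the right split (on $\chi(rk-2r+1)$, and in (b) also on $\chi(r)$); likewise locating the families in the first place --- above all the ``$r$ free copies'' family used for (e) --- is where whatever cleverness is needed resides. I expect the main obstacle to be exactly this search for the right tuples, since there is no single family that works across all of (a)--(f).
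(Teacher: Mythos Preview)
Your proof is correct and follows essentially the same approach as the paper: exhibit an explicit solution tuple for each part, using the ``$r$ equal copies'' trick to neutralise unknown colours, and route the harder parts through the easier ones via a case split on an auxiliary colour. The specific tuples you choose differ in places---your (f) avoids the paper's preliminary appeal to (c) by using $r-1$ copies of $r-1$ and $k-r$ copies of $r$; your (e) uses $m=(r-1)k/r-1$ where the paper uses $2r-3$; and your (d) splits on $\chi(2)$ and $\chi(rk-2r+1)$ where the paper splits on $\chi((r-1)(k-1))$---but these are cosmetic variations within the same method.
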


\begin{proof} We will prove each possibility separately; however, the order in which we do so matters so we will not be
proving them in the order listed.

\vskip 5pt
\noindent
(c) Consider $(1, 1, \ldots, 1, k-1)$.  If  $\chi(k-1) =0$ then this solution is $r$-zero sum.  

\vskip 5pt
\noindent
(d) By considering the solution $(r-1,\ldots,r-1,(r-1)(k-1))$, we can assume that $\chi((r-1)(k-1))=0$.
Using this in $(\underbrace{1,\ldots,1}_{k-r+1},\underbrace{k,\ldots, k}_{r-2},(r-1)(k-1))$ along
with the assumption that $\chi(k)=0$, we have an $r$-zero-sum solution.

\vskip 5pt
\noindent
(f) From part (c), we may assume that $\chi(k-1)=1$. Looking at  $(r-1, \ldots,r-1,k-1,rk-2r+1)$, since $\chi(k-1) =\chi(r-1) =1$, and we assume that $\chi(rk-2r+1)=1$, we have an $r$-zero sum solution.

\vskip 5pt
\noindent
(a) From part (f), we may assume that $\chi(rk-2r+1)=0$. With this assumption, we see that $( 1,r,\ldots, r, rk-2r+1)$ is $r$-zero-sum solution when $\chi(r)=0.$

\vskip 5pt
\noindent
(b) From parts (a) and (f), we may assume $\chi(r)=1$ and $\chi(rk-2r+1)=0$.  Under these
assumptions, we find that $(\underbrace{1, \ldots, 1}_{k-r-1}, r,\underbrace{k-2, \ldots, k-2}_{r-1}, rk-2r+1)$ is
an $r$-zero-sum solution with $\chi(k-2)=1$.

\vskip 5pt
\noindent
(e) By considering $(\underbrace{1, \ldots, 1}_{r-1}, \underbrace{r,\dots,r}_{k-2r},\underbrace{2r-3, \ldots, 2r-3}_{r}, rk-2r-1)$ and using $r \mid k$, we have an $r$-zero-sum solution only when $\chi(rk-2r-1)=0$.
\end{proof}

\section{Proof of the Main Result}

First, we restate the main result.

\vskip 5pt
\noindent
{\bf Theorem 3.} 
Let $k$ and  $r$ be positive integers  such that $ r \mid k$ and $k\geq 2r$. Then, 
$$S_{\mathfrak{z},2}(k;r)= rk -2r+1.
$$

\vskip 5pt
\noindent
{\it Proof.} 
We start with the lower bound.
To prove that $S_{\mathfrak{z},2}(k;r) > rk -2r $, we consider the following $2$-coloring $\chi$ of $[1, rk-2r]$  defined by $(0)^{k-2} (1)^{rk-k-2r+2}$.  Assume, for a contradiction, that $\chi$ admits an $r$-zero-sum
solution $(\hat{x}_1, \hat{x}_2, \ldots, \hat{x}_k)$ to equation $\mathcal{E}$. Then $\chi(\hat{x}_i) = 1$ for some $i \in \{1,2,\dots,k\}$; otherwise the solution is monochromatic of color $0$,
but $\sum_{i=1}^{k-1}\hat{x}_i \geq k-1$, meaning that $\hat{x}_k$ cannot be of color 0. 

Assuming that $(\hat{x}_1, \hat{x}_2, \ldots, \hat{x}_k)$ is  $r$-zero-sum and not monochromatic of color 0,  we must have $\chi(x_j) = 1$ for  at least $r$ of the $x_j$'s.  Since the minimum integer under $\chi$ that is of color $1$ is $k-1$, this gives us 
$$
\sum_{i=1}^{k-1} x_i \geq (r-1) (k-1) + 1(k-r)  = (rk-2r+1)> rk-2r,
$$
which is out of bounds, a contradiction. Hence, $\chi$ does not admit an
$r$-zero-sum solution to $\mathcal{E}$ and we
conclude that $\Sr \geq rk-2r+1$.

We now move on to the upper bound.
We let $\chi:[1,rk-2r+1]\rightarrow \{0,1\}$ be an arbitrary $2$-coloring. We may assume  that $\chi(1) = 0$, since $\chi$ admits an $r$-zero-sum solution if and only if the induced coloring $\overline{\chi}$ defined by $\overline{\chi}(i) = 1- \chi(i)$ also does so.

The cases $r=2,3$ have been done by Theorem \ref{thm2}.  Hence,
we may assume that $r \geq 4$.  We must handle the case $r=4$ separately; we start with this case.

We will show that $4k-7$ serves as an upper bound for $S_{\mathfrak{z}}(k;r)$.
Consider the following solution to $\mathcal{E}$:
$$
 (1,1,1, \underbrace{2, \ldots, 2}_{k-8}, 3,3, k, k, 4k-7).
$$
 
Noting that $r-1=3$ and $rk-2r+1=4k-7$, by Lemmas \ref{lem1} and \ref{lem2}, we may assume $\chi(3)=1$, $\chi(k)=1$, 
and $\chi(4k-7)=0$.  Since $k$ is a multiple of $4$ and $k \geq 8$, we see that $k-8$ is also a multiple of
4.  Hence, the color of $2$ does not affect whether or not this solution is $r$-zero-sum.  Of the
integers not equal to 2, we have exactly four of them of color 1.  Hence, this solution is
$4$-zero-sum.  This, along with the lower bound above, proves that $S_{\mathfrak{z},2}(k;4)=4k-7$.

We now move on to the cases where $r \geq 5$.  We proceed by  assuming that no $r$-zero-sum solution occurs under an arbitrary 2-coloring $\chi:[1,rk-2r+1] \rightarrow \{0,1\}$.   From Lemmas \ref{lem1} and \ref{lem2}, we may assume
the following table of colors holds.

$$
\begin{array}{l|l}
\mbox{color 0}&\mbox{color 1}\\\hline
1&r-1\\
k-2&r\\
rk-2r+1&k-1\\
&k\\
&rk-2r-1.
\end{array}
$$

In order for the solution
$$
(\underbrace{1, \ldots, 1}_{k-r}, \underbrace{k-2, \ldots, k-2}_{r-2},k+r-3, rk-2r+1)
$$ 
not to be $r$-zero-sum, we deduce that $\chi(k+r-3)=1$.  Using this in the solution
$$
(\underbrace{1, \ldots, 1}_{k-r}, \underbrace{k, \ldots, k}_{r-2}, k+r-3,  rk-3)
$$ 
we may assume that $\chi(rk-3)=0$.  In turn, we use this in
$$
(\underbrace{2, \ldots, 2}_{k-r-1}, r,r-1,\underbrace{k, \ldots, k}_{r-2},  rk-3)
$$ 
to deduce that $\chi(2)=1$.  Modifying this last solution slightly, we consider
$$
(\underbrace{3, \ldots, 3}_{k-r-1}, r,r,r,\underbrace{k, \ldots, k}_{r-3},  rk-3)
$$ 
to deduce that $\chi(3)=1$.  Finally, since $r \geq 5$, we can consider
$$
(\underbrace{2, \ldots, 2}_{k-2r+6}, \underbrace{3, \ldots, 3}_{r-5}, \underbrace{k-1, \ldots, k-1}_{r-2},  rk-2r-1).
$$ 
We see that this solution is monochromatic (of color 1), and, hence, is $r$-zero-sum.
This proves that $\Sr \leq rk-2r+1$ for $r \geq 5$, which, together with the lower bound at the
beginning of the proof, gives us $\Sr=rk-2r+1$, thereby completing the proof.
\hfill $\Box$

\vskip 20pt
\begin{acknowledgement*} 
We would  like to sincerely thank Prof. R Thangadurai for his insightful remarks. This work was done while second and third authors were visiting the Department of Mathematics, Ramakrishna Mission Vivekananda Educational and Research Institute and they wish to thank this institute for the excellent environment and for their  hospitality.
\end{acknowledgement*}

\end{document}